\documentclass[12pt,reqno]{amsart}
\usepackage{hyperref}
\usepackage{amsmath,amssymb,amsfonts}
\usepackage[english]{babel}
\usepackage{caption}
\usepackage{amssymb,amsmath,euscript,enumerate,tikz}
\usepackage[margin=1in]{geometry}
\usepackage{graphicx}
\usepackage{float}
\usepackage{pgf,tikz}
\usepackage{mathrsfs}
\usepackage{upgreek}

\newcommand \reg{\operatorname{reg}}

\newcommand \ini{\operatorname{in}}

\newcommand \HS{\operatorname{HS}}
\newcommand \R{\mathcal{R}}
\newcommand \F{\mathcal{F}}

\newcommand \K{\mathbb{K}}

\newcommand{\KK}{\mathbb{K}}

\newtheorem{theorem}{Theorem}[section]

\newtheorem{corollary}[theorem]{Corollary}

\newtheorem{notation}[theorem]{Notation}

\begin{document}
	\title[Rees algebra and special fiber ring of binomial edge ideals of closed graphs]{Rees algebra and special fiber ring of binomial edge ideals of closed graphs}
	\author[Arvind Kumar]{Arvind Kumar}
	\email{arvindkumar@cmi.ac.in}
	\address{Department of Mathematics, Chennai Mathematical Institute, Siruseri
		Kelambakkam, Chennai, India - 603103}

	\begin{abstract} In this article, we compute the regularity of Rees algebra of binomial edge ideals of closed graphs. We obtain a lower bound for the regularity of Rees algebra of binomial edge ideals. We also study some algebraic properties of the Rees algebra and special fiber ring of binomial edge ideals of closed graphs via algebraic properties of their initial algebra and Sagbi basis theory. We obtain an upper bound for the regularity of the special fiber ring of binomial edge ideals of closed graphs. 
	\end{abstract}
	\keywords{Binomial edge ideal, Rees Algebra, special fiber ring, Castelnuovo-Mumford regularity, Closed graphs}
	\thanks{AMS Subject Classification (2010): 13D02, 13C13, 05E40}
	\maketitle
	\section{Introduction}
	Let $S$ be a standard graded polynomial ring over a field $\KK$. Let $I$ be a homogenous ideal in $S$. The $\KK$-subalgebra  $\displaystyle\mathop\oplus_{k\geq0}I^kt^k \subset S[t]$ is known as the  {\it Rees algebra} of $I$, and  is denoted by $\mathop\mathcal{R}(I)$.  The Rees algebra of a homogeneous ideal encodes a lot of asymptotic properties of that ideal. In this paper, we study the Rees algebra of binomial edge ideals of closed graphs.  
	
	An ideal generated by a set of $2$-minors of a $2 \times n$ generic matrix is known as {\it binomial edge ideal}.  These ideals were
	introduced by Herzog et al. in \cite{HH1} and independently by Ohtani in \cite{oh} a decade ago.   Let $G$ be a  simple graph with vertex set  $V(G) =[n]:= \{1, \ldots, n\}$ and edge set $E(G)$.  The binomial edge ideal of $G$ is defined  as $J_G = (
	x_i y_j - x_j y_i ~ : i < j \text{ and } \{i,j\} \in E(G)) \subset S=
	\K[x_1, \ldots, x_{n}, y_1, \ldots, y_{n}]$. Since binomial edge ideals are in one-to-one correspondence with finite simple graphs, many authors have intensively studied these ideals' algebraic properties in terms of combinatorial properties of graphs. For example, Gr\"obner basis and primary decomposition of binomial edge ideals have been computed in terms of combinatorial invariants of graphs in \cite{MG, HH1, oh}, Castelnuovo-Mumford regularity of binomial edge ideals has been studied in terms of various combinatorial invariants of graphs in \cite{JACM, AR3, MM, MMK1}, extremal Betti numbers of block graphs and generalized block graphs have been studied in terms of minimal cut sets of graphs in \cite{her2, AR2}, and Cohen-Macaulayness of binomial edge ideals has been studied in \cite{dav, dav2, her1, Rauf}. 
	
	While the study of binomial edge ideals has been explored by many researchers, much less is known about their Rees algebra. The study of the Rees algebra of binomial edge ideals has been initiated in \cite{JAR}.  The authors in \cite{JAR} obtained the defining ideal of Rees algebra of almost complete intersection binomial edge ideals and prove that the Rees algebra of almost complete intersection binomial edge ideals are Cohen-Macaulay.  Recently, the authors in \cite{ERT} showed that the Rees algebra of binomial edge ideals of closed graphs is Cohen-Macaulay. In this paper, we prove that the regularity of Rees algebra of binomial edge ideal of a graph is bounded below by the length of that graph's longest induced path. We obtain the regularity of Rees algebra of binomial edge ideals of closed graphs. We then study some algebraic properties of the Rees algebra of binomial edge ideals of closed graphs via algebraic properties of its initial algebra and Sagbi basis theory.
	
	Another $\KK$-algebra associated with a  homogeneous ideal is the special fiber ring. The {\it special fiber ring } of a homogeneous ideal $I$  is the ring $\displaystyle \mathcal{F}(I)= \R(I)/\mathfrak{m}\R(I) \cong \mathop\oplus_{k \geq 0} I^k/\mathfrak{m}I^{k}$, where $\mathfrak{m}$ is the homogeneous  maximal ideal of $S$. Nothing much is known about the special fiber ring of binomial edge ideals. The author in \cite{AR4} proved that the special fiber ring of binomial edge ideals of forests is a polynomial ring.  In this paper, we study the special fiber ring of binomial edge ideals of closed graphs. We prove that the special fiber ring of binomial edge ideals of closed graphs is Koszul and  Cohen-Macaulay normal. We obtain the analytic spread of binomial edge ideals of closed graphs in terms of the number of vertices and the number of indecomposable components of $G$. We also obtain an upper bound for the regularity of the special fiber ring of binomial edge ideals of closed graphs. 
	
	The article is organized as follows. The second section contains all
	the necessary definitions and notation required in the rest of the article. In Section 3, we study the  Rees algebra of binomial edge ideals of closed graphs.  We study the special fiber ring of binomial edge ideals of closed graphs in Section 4.
	
	\section{Preliminaries}
	In this section, we collect all the notions that we use in this paper. We first recall all the necessary definitions from graph theory.

	Let $G$  be a  simple graph with the vertex set $[n]$ and edge set
	$E(G)$. A graph on $[n]$ is said to be a \textit{complete graph}, if
	$\{i,j\} \in E(G)$ for all $1 \leq i < j \leq n$. The complete graph on
	$[n]$ is denoted by $K_n$. For $A \subseteq V(G)$, $G[A]$ denotes the
	\textit{induced subgraph} of $G$ on the vertex set $A$, that is, for
	$i, j \in A$, $\{i,j\} \in E(G[A])$ if and only if $ \{i,j\} \in
	E(G)$.   A subset
	$U$ of $V(G)$ is said to be a \textit{clique} if $G[U]$ is a complete
	graph. A vertex is said to be a {\it simplicial vertex} if it belongs to exactly one maximal clique. The \textit{clique number}
	of a graph ${G}$, denoted by $\omega({G})$, is the maximum size of the maximal cliques of ${G}$.  For a vertex $v$,
	$N_G(v) = \{u \in V(G) :  \{u,v\} \in E(G)\}$ denotes the
	\textit{neighborhood} of $v$ in $G$. 
	The \textit{degree} of a vertex  $v$, denoted by $\deg_G(v)$, is
	$|N_G(v)|$.    A
	\textit{cycle} is a connected graph $G$ with $\deg_G(v) = 2$ for all $v \in V(G)$. A connected graph is a \textit{tree} if it does not have a
	cycle.  A graph is said to be a {\it forest} if each connected component is a tree. A graph $G$ is said to be \textit{bipartite} if 
	there is a bipartition of $V(G)=V_1 \sqcup V_2$ such that for each
	$i=1,2$, no two of the vertices of $V_i$ are adjacent. A graph is  called a  \textit{non-bipartite} graph if it is not a bipartite graph. A subset $M$ of $E(G)$ is  said to be a {\it matching} of $G$ if $ e \cap e' =\emptyset$ for every pair $e,e' \in M$ with $e \neq e'$. The {\it matching
		number} of a graph $G$, denoted by $\text{mat}(G)$, is the maximum  size of the maximal matchings of $G$.  A matching $M$ is said to be a {\it perfect matching} if $\displaystyle V(G) = \cup_{e \in M} e$. A graph $G$ is said  to be $H$-free if $H$ is not an induced subgraph of $G$. 
	
	We recall the notation of decomposability from \cite{Rauf}. 
	A graph $G$ is called \textit{decomposable}, if there exist subgraphs $G_1$ and $ G_2$ such that $G$ is obtained by identifying 
	a simplicial vertex $v_1$ of $G_1$ with a simplicial vertex $v_2$ of $G_2 $, i.e., 
	$G= G_1 \cup G_2$ with $V(G_1)\cap V(G_2)=\{v\}$ such that $v$ is a 
	simplicial vertex of both $G_1$ and $G_2$. A graph $G$ is called \textit{indecomposable}, if it is not decomposable. Up to ordering, $G$
	has  a unique decomposition into indecomposable subgraphs, i.e., there exist 
	$G_1,\ldots,G_r$ indecomposable induced subgraphs of $G$ with 
	$G=G_1\cup \cdots \cup G_r$ such that for each $i \neq j$, either 
	$V(G_i) \cap V(G_j) = \emptyset$ or $V(G_i) \cap V(G_j) =\{v\}$ and $v$ is a  simplicial vertex 
	of both $G_i$ and $G_j$. 
	
	Now, we recall all the necessary notation from commutative algebra.
	Let $R = \KK[x_1,\ldots,x_m]$ be a standard graded polynomial ring over an arbitrary
	field $\KK$ and $M$ be a finitely generated graded  $R$-module. 
	Let
	\[
	0 \longrightarrow \bigoplus_{j \in \mathbb{Z}} R(-j)^{\beta_{p,j}^R(M)} 
	\overset{\phi_{p}}{\longrightarrow} \cdots \overset{\phi_1}{\longrightarrow} \bigoplus_{j \in \mathbb{Z}} R(-j)^{\beta_{0,j}^R(M)} 
	\overset{\phi_0}{\longrightarrow} M\longrightarrow 0,
	\]
	be the minimal graded free resolution of $M$, where
	$R(-j)$ is the free $R$-module of rank $1$ generated in degree $j$.
	The number $\beta_{i,j}^R(M)$ is  called the 
	{\it $(i,j)$-th graded Betti number of $M$}. The 
	{\it Castelnuovo-Mumford regularity (simply regularity) of $M$}, denoted by $\reg(M)$, is defined as 
	\[
	\reg(M):=\max \{j-i : \beta_{i,j}^R(M) \neq 0\}
	.\]
	
	Let $G$ be a graph on $[n]$ and $J_{G}$ be its binomial edge ideal in the standard graded  polynomial ring $S=\KK[x_1,\ldots,x_n,y_1,\ldots,y_n]$. Then, $G$ is said to be {\it closed with respect to given labelling of vertices} if the generators of $J_G$ form a quadratic  Gr\"obner basis of $J_G$ with respect to the lexicographic term order on $S$ induced by $x_1 > \cdots > x_n > y_1 > \cdots > y_n$. We say that a graph is a {\it  closed graph }  if it is closed  with respect to some labelling of vertices.
	
	\section{Rees Algebra of binomial edge ideals of closed graphs}
	In this section, we study the regularity of Rees algebra of binomial edge ideals of closed graphs. We also study some other algebraic properties of $\R(J_G)$ via algebraic properties of  $\R(\ini_{\sigma}(J_G))$ and Sagbi basis theory. 
	
	Let $\KK $ be a field and $S=\KK[x_1,\ldots,x_n]$ be a standard graded polynomial ring over $\KK$. Let $A$ be a finitely generated $\KK-$subalgebra of $S$ generated by homogeneous elements. Let $\sigma $ be a term order for the monomials in $S$. The {\it initial algebra} of $A$ with respect to the term order $\sigma$ is the $\KK-$subalgebra of $S$ generated by $\{ \ini_{\sigma}(f) \; : \; f \in A\}$. Let $A_i$ denote the homogeneous component of degree $i$.  The $\KK-$vector space spanned by $\{ \ini_{\sigma}(f) \; : \; f \in A_i\}$ is denoted by $\ini_{\sigma}(A_i)$. It follows from \cite[Proposition 2.4]{CHV} that 
	\[ \ini_{\sigma}(A) =\mathop\oplus_{\substack{i \geq 0}}  \ini_{\sigma}(A_i). \] Moreover, the Hilbert functions of $A$ and $\ini_{\sigma}(A)$ concide.

	Let $I$ be a homogeneous ideal in $S$. The {\it Rees algebra } of $I$ is defined as \begin{align*}
	\R(I)	= \mathop\oplus_{i \geq 0} I^it^i.\end{align*}   
	Throughout this article, we assume that $I$ is an equi-generated homogeneous ideal. Let $ \{f_1,\ldots,f_m\}$ be a minimal homogeneous generating set of $I$, and $R=S[t_1,\ldots,t_m]$ be a standard graded polynomial ring over $\KK$.
	Let $\psi : R \to S[t]$ be the $S$-algebra homomorphism given by
	$\psi(t_{i}) = f_{i}t$. Then, $R/\ker (\psi) \simeq$  Im$(\psi) = \R(I)$, where $\ker (\psi)$ is a homogeneous ideal of $R$  and it is 
	called the \textit{defining ideal} of $\R(I)$. The {\it relation type} of an ideal $I$ is the  largest $t$-degree of a minimal generator of the defining ideal of $\R(I)$. We say that $I$ is of {\it linear type} if Rees algebra of $I$ is isomorphic to the symmetric algebra of $I$. If the defining ideal of Rees algebra is generated by defining equations of the symmetric algebra and the defining equations of the special fiber ring, then we say that $I$ is of  {\it fiber type}. 
	
	Let $\sigma $ be a term order for the monomials in $S$. Let $\tau$ be term order for monomials in  $S[t]$ defined as follows:  given two monomials $u,v\in S$ and two integers $i,j\geq 0,$ we have $ut^i <_\tau vt^j$ if and only if $i<j$ or $i=j$ and $u<_{\sigma}v.$ Then, the  {initial algebra} of $\R(I)$ with respect to the term order $\tau$ is \begin{align*} \ini_{\tau}(\R(I)) = \mathop\oplus_{i \geq 0} \ini_{\sigma} (I^i)t^i.\end{align*}
	
	\begin{notation}
		Let $I$ be an equi-generated homogeneous ideal in  $S$. Then, by $\reg(\R(I))$, we mean the regularity of $R/\ker(\psi)$ as $R$-graded module. 
	\end{notation}
	\begin{theorem}\label{main-rees-reg}
		Let $I=( f_1,\ldots,f_m) \subset S$ be an equi-generated homogeneous ideal which satisfies the followings:
		\begin{enumerate}
			\item $\ini_{\sigma}(I) = (\ini_{\sigma}(f_1),\ldots, \ini_{\sigma}(f_m) )$;
			\item $\ini_{\sigma}(I^s) =(\ini_{\sigma}(I))^s$, for all $s\geq 1$;
			\item $\R(\ini_{\sigma}(I)) $ is Cohen-Macaulay.
		\end{enumerate} Then, $\reg(\R(I)) =\reg(\R(\ini_{\sigma}(I)))$.
	\end{theorem}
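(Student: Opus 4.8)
The plan is to realize the hypotheses as a Sagbi-basis situation, degenerate $\R(I)$ to $\R(\ini_{\sigma}(I))$ through a flat family, and then let Cohen-Macaulayness collapse the comparison of regularities into a comparison of Hilbert series, which the degeneration already guarantees to coincide.

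First I would identify the initial algebra explicitly. The formula $\ini_{\tau}(\R(I)) = \bigoplus_{i \geq 0} \ini_{\sigma}(I^i)t^i$ recalled before the statement, combined with hypothesis (2), gives $\ini_{\tau}(\R(I)) = \bigoplus_{i \geq 0} (\ini_{\sigma}(I))^i t^i = \R(\ini_{\sigma}(I))$. Hypothesis (1) ensures that $\ini_{\sigma}(f_1),\ldots,\ini_{\sigma}(f_m)$ generate $\ini_{\sigma}(I)$, so that $\R(\ini_{\sigma}(I))$ is presented by the \emph{same} polynomial ring $R=S[t_1,\ldots,t_m]$ via $\psi'(t_i)=\ini_{\sigma}(f_i)t$. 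Together these say precisely that the algebra generators $x_1,\ldots,x_n,f_1t,\ldots,f_mt$ of $\R(I)$ form a Sagbi basis with respect to $\tau$, since the $\tau$-initial term of $x_j$ is $x_j$ and that of $f_it$ is $\ini_{\sigma}(f_i)t$.

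Next I would invoke the Sagbi-deformation machinery in the spirit of \cite{CHV}: because the generators form a Sagbi basis, there is a weight $w$ on $R$, homogeneous for the standard grading of $R$, for which the defining ideal $P'=\ker(\psi')$ of $\R(\ini_{\sigma}(I))$ equals the initial ideal $\ini_{w}(P)$ of the defining ideal $P=\ker(\psi)$ of $\R(I)$. Thus $R/P \rightsquigarrow R/P'$ is a flat degeneration. Two standard consequences follow: the quotients share the same Hilbert series, hence the same Krull dimension $d$; and the Betti numbers satisfy $\beta_{i,j}^{R}(\R(I)) \leq \beta_{i,j}^{R}(\R(\ini_{\sigma}(I)))$, equivalently $\depth(\R(I)) \geq \depth(\R(\ini_{\sigma}(I)))$.

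Now hypothesis (3) enters. Cohen-Macaulayness of $\R(\ini_{\sigma}(I))$ gives $\depth(\R(\ini_{\sigma}(I)))=d$, so the depth inequality forces $\depth(\R(I)) \geq d = \dim \R(I)$, whence $\R(I)$ is Cohen-Macaulay as well. For a Cohen-Macaulay standard graded algebra the regularity is read off the Hilbert series alone: it equals the degree of the $h$-polynomial, equivalently the $a$-invariant plus $d$. Since $\R(I)$ and $\R(\ini_{\sigma}(I))$ are both Cohen-Macaulay with identical Hilbert series, their regularities agree, giving $\reg(\R(I))=\reg(\R(\ini_{\sigma}(I)))$. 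The main obstacle I expect is the degeneration step itself: producing $w$ and rigorously identifying $P'=\ini_{w}(P)$ in a grading-compatible way, so that the Hilbert-series equality and the depth semicontinuity are legitimate. Once that lifting is secured, the passage through Cohen-Macaulayness and the Hilbert-series description of regularity is routine.
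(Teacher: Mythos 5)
Your proposal is correct and follows essentially the same route as the paper: identify $\ini_{\tau}(\R(I))$ with $\R(\ini_{\sigma}(I))$ via hypothesis (2) and \cite[Theorem 2.7]{CHV}, transfer Cohen--Macaulayness from the initial algebra to $\R(I)$ (the paper cites \cite[Corollary 2.3]{CHV} for exactly the depth-semicontinuity step you re-derive), and then conclude from the equality of Hilbert series (\cite[Proposition 2.4]{CHV}) that the $h$-polynomials, hence the regularities of the two Cohen--Macaulay algebras, coincide. The only difference is that you unpack the Sagbi-deformation machinery explicitly where the paper simply cites the relevant results of Conca--Herzog--Valla.
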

	\begin{proof}
		Since  $\ini_{\sigma}(I^s) =(\ini_{\sigma}(I))^s$, for all $s\geq 1$, it follows from \cite[Theorem 2.7]{CHV} that $\ini_{\tau}(\R(I)) = \R(\ini_{\sigma}(I))$. Thus, it is enough to prove that $\reg(\R(I)) =\reg(\ini_{\tau}(\R(I)))$. Since $\ini_{\tau}(\R(I))$ is Cohen-Macaulay, by \cite[Corollary 2.3]{CHV}, $\R(I)$ is Cohen-Macaulay, and Krull dimension of $\R(I)$ and $\ini_{\tau}(\R(I))$ are same. It follows from \cite[Proposition 2.4]{CHV} that $$\HS_{\R(I)}(\lambda)  =\HS_{\ini_{\tau}(\R(I))}(\lambda),$$ and therefore, $${h}_{\R(I)}(\lambda)  ={h}_{\ini_{\tau}(\R(I))}(\lambda),$$ where $h_{\R(I)} (\lambda)$ is the $h$-polynomial of $\R(I)$. For a standard graded Cohen-Macaulay algebra $S/J$, it is well known  that $\reg(S/J)= \deg h_{S/J}(\lambda)$, (see \cite[Section 4.1]{Ene}).  Therefore, $\reg(\R(I))=\deg {h}_{\R(I)}(\lambda)$ and $\reg(\ini_{\tau}(\R(I)))= \deg {h}_{\ini_{\tau}(\R(I))}(\lambda)$. Hence, $$\reg(\R(I))=\reg(\ini_{\tau}(\R(I)))=\reg(\R(\ini_{\sigma}(I)))$$ which proves the assertion.  
	\end{proof}
	
	Let $G$ be a graph on $[n]$ and $J_{G}$ be its binomial edge ideal in the standard graded  polynomial ring $S=\KK[x_1,\ldots,x_n,y_1,\ldots,y_n]$. Let $\sigma $ be the lexicographic term order on $S$ induced by $x_1 > \cdots > x_n > y_1 > \cdots > y_n$. A graph $G$ is said to be {\it closed with respect to given labelling of vertices} if the generators of $J_G$ form a quadratic  Gr\"obner basis of $J_G$ with respect to the term order $\sigma$. We say that a graph is a {\it  closed graph }  if it is closed  with respect to some labelling of vertices.   Set $R=S[T_{\{i,j\}} ~ : ~\{i,j\} \in E(G) \text{ with } i<j]$ to be standard graded poynomial ring over $\K$.

	We now compute the regularity of Rees algebra of binomial edge ideals of closed graphs.
	\begin{theorem}\label{rees-reg}
		Let $G$ be a closed graph on $[n]$ with respect to given labelling of vertices. Assume that $G$ has no isolated vertices. Then, $$\reg(\R(J_G))=\reg(\R(\ini_{\sigma}(J_G))) =n-c,$$
		where $c$ is the number of connected components of $G$.
	\end{theorem}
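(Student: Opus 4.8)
The plan is to derive the first equality from Theorem \ref{main-rees-reg} and then to compute the common value $\reg(\R(\ini_{\sigma}(J_G)))$ combinatorially. To invoke Theorem \ref{main-rees-reg} with $I=J_G$ I must verify its three hypotheses. Hypothesis (1) is immediate: by definition of a closed graph the natural generators of $J_G$ are a Gr\"obner basis for $\sigma$, so $\ini_{\sigma}(J_G)=(x_iy_j:\{i,j\}\in E(G),\ i<j)$ is generated by the initial forms of the generators. For hypothesis (2), namely $\ini_{\sigma}(J_G^s)=(\ini_{\sigma}(J_G))^s$ for all $s\ge 1$, I would show that the natural generators form a Sagbi basis of $\R(J_G)$, equivalently that $\ini_{\tau}(\R(J_G))=\R(\ini_{\sigma}(J_G))$; for closed graphs this can be read off from the quadratic Gr\"obner basis of the defining ideal of $\R(J_G)$ exhibited in \cite{ERT}, or proved directly by reducing the Rees relations to relations among the monomials $x_iy_j$. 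For hypothesis (3) I must prove that $\R(\ini_{\sigma}(J_G))$ is Cohen-Macaulay. The point is that $\ini_{\sigma}(J_G)$ is the edge ideal of the bipartite graph $H$ on $\{x_1,\dots,x_n\}\sqcup\{y_1,\dots,y_n\}$ with $x_i\sim y_j$ exactly when $i<j$ and $\{i,j\}\in E(G)$; using the normality of edge ideals of bipartite graphs together with the interval shape of $H$ forced by the closed labelling, one identifies $\R(\ini_{\sigma}(J_G))$ with a normal toric algebra that is Cohen-Macaulay. With (1)--(3) in hand, Theorem \ref{main-rees-reg} gives $\reg(\R(J_G))=\reg(\R(\ini_{\sigma}(J_G)))$, reducing everything to a single monomial Rees algebra.

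It then remains to compute $\reg(\R(\ini_{\sigma}(J_G)))$, and here I would argue exactly as in the proof of Theorem \ref{main-rees-reg}: since this algebra is standard graded and Cohen-Macaulay, its regularity equals $\deg h_{\R(\ini_{\sigma}(J_G))}(\lambda)$ by \cite[Section 4.1]{Ene}. The computation decouples over connected components. If $G=G_1\sqcup\cdots\sqcup G_c$, the variable sets are disjoint, so $\R(\ini_{\sigma}(J_G))\cong\bigotimes_{i=1}^{c}\R(\ini_{\sigma}(J_{G_i}))$ over $\KK$; as the $h$-polynomial of a tensor product is the product of the factors' $h$-polynomials, $\deg h$ is additive and it suffices to prove $\deg h_{\R(\ini_{\sigma}(J_{G_i}))}=n_i-1$ for each connected closed $G_i$ on $n_i$ vertices, since summing gives $\sum_i(n_i-1)=n-c$.

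The heart of the argument, and the step I expect to be the main obstacle, is establishing $\deg h=n_i-1$ for a connected closed graph $G_i$. I would use the interval structure of $H$ coming from the closed labelling---for each $i$ the neighbours of $x_i$ form a consecutive block $y_{i+1},\dots,y_{r_i}$---to write down the Hilbert series of $\R(\ini_{\sigma}(J_{G_i}))$. The cleanest case is a path: the generators $x_iy_{i+1}$ then have pairwise disjoint support and form a regular sequence, so $\R(\ini_{\sigma}(J_{G_i}))$ is the quotient by the ideal of $2\times 2$ minors of the $2\times(n_i-1)$ matrix whose first row consists of the monomials $x_iy_{i+1}$ and whose second row consists of the corresponding Rees variables; the top of its Eagon--Northcott resolution, in homological degree $n_i-2$, contributes precisely $\deg h=n_i-1$. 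A connected closed graph is assembled from its maximal cliques glued along simplicial vertices in a path-like fashion, and I would induct on this clique decomposition, extending the interval structure by one block at a time and tracking the change in the $h$-polynomial. The subtle point is that $\deg h$ turns out to depend only on the vertex count and not on the individual clique sizes---for instance both $K_3$ and the path $P_3$ give $\deg h=2$---so the induction must carefully cancel the higher-degree contributions that a large clique would naively introduce; verifying this invariance is the crux. Once it is established, combining additivity over components with the reduction furnished by Theorem \ref{main-rees-reg} yields $\reg(\R(J_G))=\reg(\R(\ini_{\sigma}(J_G)))=n-c$.
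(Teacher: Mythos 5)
Your reduction to $\reg(\R(\ini_{\sigma}(J_G)))$ via Theorem \ref{main-rees-reg} matches the paper (which cites \cite{ERT} for hypotheses (2) and (3)), but the second half of your argument has two genuine problems. First, the decomposition $\R(\ini_{\sigma}(J_G))\cong\bigotimes_{i=1}^{c}\R(\ini_{\sigma}(J_{G_i}))$ over $\KK$ is false: while the special fiber ring of a sum of ideals in disjoint variables is the tensor product of the fiber rings, the Rees algebra is not, since $(I_1+I_2)^k=\sum_{a+b=k}I_1^aI_2^b$ lives in a single $t$-grading rather than a bigrading. A dimension count already rules it out: $\dim\R(I)=\dim S+1=2n+1$, whereas the tensor product has dimension $\sum_i(2n_i+1)=2n+c$. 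So the additivity of $\deg h$ over components does not follow from your argument and would need a different justification.

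Second, and more importantly, the crux of the theorem --- that $\deg h_{\R(\ini_{\sigma}(J_{G_i}))}=n_i-1$ for a connected closed $G_i$ --- is exactly the step you leave unproven: you verify it only for paths via the Eagon--Northcott complex and then say the inductive extension over the clique decomposition, with its required cancellations, ``is the crux.'' That is the theorem, not a routine verification, so as written the proposal does not establish the result. The paper sidesteps both issues at once: it identifies $\ini_{\sigma}(J_G)$ with the edge ideal $I(H)$ of a bipartite graph $H$ (via \cite[Lemma 3.3]{EZ}), observes that closedness forces $\{i,i+1\}\in E(G)$ within each component so that $H$ has a perfect matching of size $n-c$, and then quotes \cite[Theorem 4.2]{Cid}, which gives $\reg(\R(I(H)))=\operatorname{mat}(H)$ for any bipartite graph $H$, connected or not. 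If you want to avoid citing that result, you would have to actually carry out the $h$-polynomial induction, and handle the disconnected case by a mechanism other than a tensor product of Rees algebras.
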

	\begin{proof}
		Since $G$ is a closed graph with respect to given labelling of vertices, by \cite[Theorem 1.1]{HH1}, $\ini_{\sigma}(J_G)= (x_iy_j : i<j, \{i,j\} \in E(G))$. By \cite[Equation 3]{ERT}, $\ini_{\sigma}(J_G^s) =(\ini_{\sigma}(J_G))^s$, for all $s \geq 1$ and by \cite[Proposition 2.9]{ERT},  $\R(\ini_{\sigma}(J_G))$ is Cohen-Macaulay.   Thus, by Theorem \ref{main-rees-reg},  $\reg(\R(J_G))=\reg(\R(\ini_{\sigma}(J_G)))$. 
		
		First, we assume that $G$ is connected. Let $H$ be a graph on the vertex set $\{x_1 \ldots,x_{n-1}\} \sqcup \{y_2,\ldots, y_n\}$ and edge set $\{\{x_i,y_j\} : i <j, \{i,j\} \in E(G)\}$. It follows from \cite[Lemma 3.3]{EZ} that $H$ is a bipartite graph and the monomial edge ideal  of $H$ is $I(H)=\ini_{\sigma}(J_G)$. By \cite[Section 2]{her1}, $\{i,i+1\} \in E(G)$ for $1 \leq i \leq n-1$, therefore $\{ \{ x_i,y _{i+1}\} : 1 \leq i \leq n-1 \} \subset E(H)$ is a perfect matching of $H$.  Therefore, by \cite[Theorem  4.2]{Cid}, $\reg(\R(I(H))) =\text{mat}(H)=n-1$, where $\text{mat}(H)$ is the matching number of $H$. Thus,  $\reg(\R(J_G))=\reg(\R(\ini_{\sigma}(J_G))) =n-1.$ 
		
		Now, assume that $G$ is not connected. Let $G_1 , \ldots, G_c$ be the connected components of $G$. For each $ 1 \leq k \leq c$, let $H_k$ be the bipartite graph such that $I(H_k) = \ini_{\sigma}(J_{G_k})$. Then, $H=H_1 \sqcup \cdots \sqcup H_k$ is a bipartite graph with a perfect matching of size $n-c$. By virtue of  \cite[Theorem  4.2]{Cid}, we have $\reg(\R(I(H))) =\text{mat}(H)=n-c$. Hence,   $\reg(\R(J_G))=\reg(\R(\ini_{\sigma}(J_G))) =n-c.$ 
	\end{proof}
	\begin{corollary}\label{reg-closed}
		Let $G$ be a closed graph on  $[n]$. Assume that $G$ has no isolated vertices. Then, $\reg(\R(J_G)) =n-c$, where $c$ is the number of connected components of $G$.
	\end{corollary}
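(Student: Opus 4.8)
The plan is to reduce Corollary \ref{reg-closed} to the already-established Theorem \ref{rees-reg} by exploiting the fact that the regularity of the Rees algebra depends only on the isomorphism class of $J_G$ and not on the particular labelling of the vertices of $G$. The gap between the two statements is precisely that Theorem \ref{rees-reg} assumes $G$ is closed \emph{with respect to the given labelling}, whereas here $G$ is only assumed to be a closed graph, i.e.\ closed with respect to \emph{some} labelling.

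First, I would invoke the definition of a closed graph: since $G$ is closed, there is a relabelling of its vertices, encoded by a permutation $\pi \in \mathfrak{S}_n$, with respect to which $G$ is closed. Writing $G^{\pi}$ for the graph obtained from $G$ by relabelling each vertex $i$ as $\pi(i)$, the graph $G^{\pi}$ is closed with respect to the standard order $1 < \cdots < n$. Because relabelling is a graph isomorphism, $G^{\pi}$ has no isolated vertices, has the same number of vertices $n$, and has the same number $c$ of connected components as $G$.

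Next, I would observe that $\pi$ induces a graded $\KK$-algebra automorphism $\Phi$ of $S$ determined by $\Phi(x_i) = x_{\pi(i)}$ and $\Phi(y_i) = y_{\pi(i)}$, and that $\Phi$ carries $J_G$ onto $J_{G^{\pi}}$ (since the generator attached to an edge $\{i,j\}$ is sent, up to sign, to the generator attached to $\{\pi(i),\pi(j)\}$). Extending $\Phi$ to $R$ by sending the Rees variable $T_{\{i,j\}}$ to $T_{\{\pi(i),\pi(j)\}}$ yields a graded isomorphism of the Rees algebras $\R(J_G) \cong \R(J_{G^{\pi}})$. As graded isomorphisms preserve graded Betti numbers, they preserve regularity, so $\reg(\R(J_G)) = \reg(\R(J_{G^{\pi}}))$.

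Finally, applying Theorem \ref{rees-reg} to the closed graph $G^{\pi}$ gives $\reg(\R(J_{G^{\pi}})) = n - c$, and combining the two equalities yields the claimed value $\reg(\R(J_G)) = n - c$. The only point requiring care is verifying that the relabelling automorphism $\Phi$ restricts correctly to $J_G$ and extends compatibly over the Rees variables, but this is routine bookkeeping once the bijection $\{i,j\} \mapsto \{\pi(i),\pi(j)\}$ on edges is fixed; there is no substantive obstacle beyond this combinatorial translation.
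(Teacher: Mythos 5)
Your proposal is correct and matches the paper's intent: the corollary is stated without proof as an immediate consequence of Theorem \ref{rees-reg}, the only gap being the passage from ``closed with respect to some labelling'' to the given labelling, which the paper handles elsewhere by simply assuming the labelling without loss of generality. Your relabelling-automorphism argument is exactly the routine justification being left implicit.
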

	
	Now, we obtain  a lower bound for the regularity of Rees algebra of binomial edge ideals.
	\begin{theorem}
		Let $G$ be a graph on $[n]$. If $H$ is an induced subgraph of $G$, then $$\reg(\R(J_H)) \leq \reg(\R(J_G)).$$ In particular, $\ell(G) \leq \reg(\R(J_G))$, where $\ell(G)$ is the length of a longest induced path in $G$.
	\end{theorem}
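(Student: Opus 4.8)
The plan is to reduce the inequality to a statement about algebra retracts. Write $H=G[A]$ for the induced subgraph on $A=V(H)\subseteq[n]$; since $H$ is induced, the generators of $J_H$ are exactly those generators $x_iy_j-x_jy_i$ of $J_G$ with $\{i,j\}\subseteq A$, so $J_H\subseteq J_G$, hence $J_H^k\subseteq J_G^k$ for all $k$ and $\R(J_H)\subseteq\R(J_G)$ as graded subalgebras of $S[t]$. First I would produce a retraction of this inclusion. Let $S_A=\KK[x_i,y_i:i\in A]$ and let $\pi\colon S[t]\to S_A[t]$ be the $\KK$-algebra map fixing $t$ and the variables indexed by $A$ and sending $x_i,y_i\mapsto 0$ for $i\notin A$. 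For $\{i,j\}\in E(G)$ the image $\pi(x_iy_j-x_jy_i)$ is $x_iy_j-x_jy_i$ when $\{i,j\}\subseteq A$ and is $0$ otherwise, so $\pi$ sends each generator of $J_G$ to a generator of $J_H$ or to $0$; consequently $\pi(J_G^k)=J_H^k$ and $\pi$ restricts to a surjection $\R(J_G)\to\R(J_H)$ that is the identity on $\R(J_H)$. Thus $\R(J_H)$ is an algebra retract of $\R(J_G)$. I would record this at the level of defining ideals through the commuting square relating the maps $\psi$ presenting $\R(J_G)$ and $\R(J_H)$, the coordinate projection $R\twoheadrightarrow S_A[T_{\{i,j\}}:\{i,j\}\in E(H)]=:R_H$ and the map $\pi$, exhibiting the retraction as the specialization that kills the superfluous presentation variables.

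The crux is then to show that regularity cannot increase under passing to an algebra retract, that is, $\reg(\R(J_H))\le\reg(\R(J_G))$. The section $\iota\colon\R(J_H)\hookrightarrow\R(J_G)$ together with $\pi$ splits $\R(J_G)\cong\R(J_H)\oplus C$ as graded $\R(J_H)$-modules, and the aim is to transport this splitting into a comparison of graded Betti numbers over the two presentation polynomial rings $R_H\subseteq R$, so that any nonzero $\beta_{i,j}(\R(J_H))$ with $j-i=\reg(\R(J_H))$ forces a nonzero $\beta_{i',j'}(\R(J_G))$ with $j'-i'\ge\reg(\R(J_H))$. I expect this to be the main obstacle: because $\R(J_G)$ is \emph{not} a finite module over $\R(J_H)$ (it carries the extra variables $x_i,y_i$ with $i\notin A$ and the extra $T_{\{i,j\}}$), the splitting is only $\R(J_H)$-linear and does not descend naively to the local cohomology with respect to the irrelevant ideal of $\R(J_G)$, nor to a single $\Tor$ comparison over $R$. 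I would overcome this either by invoking the monotonicity of Castelnuovo--Mumford regularity under algebra retracts, or by a direct analysis of the coordinate projection $R\to R_H$ that tracks the extremal $\Tor$-classes through the induced maps, using that $\iota$ and $\pi$ split the relevant homology compatibly with the gradings.

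Granting the inequality, the ``in particular'' statement is immediate. Let $P$ be a longest induced path of $G$, so $P=G[W]$ for some $W\subseteq[n]$ with $|W|=\ell(G)+1$, and $P$ is an induced subgraph of $G$. A path is a closed graph with respect to its natural labelling, it is connected, and (when $\ell(G)\ge 1$; the case $\ell(G)=0$ being trivial) it has no isolated vertices. Hence Corollary~\ref{reg-closed} applies to $P$ and gives $\reg(\R(J_P))=(\ell(G)+1)-1=\ell(G)$. Combining this with $\reg(\R(J_P))\le\reg(\R(J_G))$ yields $\ell(G)\le\reg(\R(J_G))$, as required.
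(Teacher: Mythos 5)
Your proposal is correct and follows essentially the same route as the paper: exhibit $\R(J_H)$ as an algebra retract of $\R(J_G)$ via the coordinate projection killing the variables outside $V(H)$, apply the monotonicity of regularity under algebra retracts (the paper cites Ohsugi--Herzog--Hibi, \cite[Corollary 2.5]{HHH}, for exactly this), and then evaluate on a longest induced path via Corollary~\ref{reg-closed}. The only cosmetic difference is that the paper justifies the subalgebra relation by the intersection formula $J_H^s = J_G^s\cap S_H$ from \cite{JAR1}, whereas you derive the retraction directly from $\pi(J_G^k)=J_H^k$; both are fine.
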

	\begin{proof}
		Let $H$ be an induced subgraph of $G$. It follows from the proof of \cite[Proposition 3.3]{JAR1} that $J_H^s = J_G^s \cap S_H$, where $S_H=\KK[x_j,y_j : j \in V(H)]$. Therefore, for every $i \geq 0$, $J_H^i t^i =J_G^i t^i \cap S_H[t]$. Then, $\displaystyle \R(J_G) \cap S_H[t] =(\mathop\oplus\limits_{i \geq 0} J_G^i t^i) \cap S_H[t]=\mathop\oplus\limits_{i \geq 0} \left(J_G^i t^i \cap S_H[t]\right) =\mathop\oplus\limits_{i \geq 0} J_H^i t^i=\R(J_H)$. Thus, $\R(J_H)$ is a $\KK$-subalgebra of $\R(J_G)$.  Set $R_H=S_H[T_{\{i,j\}} : \{i,j\} \in E(H) \text{ with } i < j]$. Let $I_1$ and $I_2$ be ideals of $R$ and $R_H$, respectively such that $R/I_1 \simeq \R(J_G)$, and $R_H/I_2 \simeq \R(J_H)$. Now, define $\pi : R/I_1 \rightarrow R_H/I_2$ as $\pi(\overline{x}_j)=\pi(\overline{y}_j)=0$ if $j \not\in V(H)$,  $\pi(\overline{x}_j) =\overline{x}_j, \pi(\overline{y}_j)=\overline{y}_j$ if $j \in V(H)$, $\pi(\overline{T}_{\{i,j\}})=0$ if $\{i,j\} \not\in E(H)$, and  $\pi(\overline{T}_{\{i,j\}})=\overline{T}_{\{i,j\}}$ if $\{i,j\} \in E(H)$.  Consider, $R_H/I_2 \stackrel{i}\hookrightarrow R/I_1 \xrightarrow{\pi} R_H/I_2$. Then, $\pi \circ i$ is identity on $R_H/I_2$, and hence, $R_H/I_2$ is an algebra retract of $R/I_1$. It follows from \cite[Corollary 2.5]{HHH} that $\reg(R_H/I_2) \leq \reg(R/I_1)$. Hence, $\reg(\R(J_H) ) \leq \reg(\R(J_G))$. 
		
		Let $H$ be a longest induced path of $G$. Then, $H$ is an induced path of $G$.  Since $H$ is a closed graph, by Corollary \ref{reg-closed}, $\reg(\R(J_H))=|V(H)|-1=\ell(G)$. Hence, $\reg(\R(J_G)) \geq \ell(G)$.
	\end{proof}
	
	We now move on to study a Sagbi basis for $\R(J_G)$, and using that we study some of algebraic properties of $\R(J_G)$ via algebraic properties of $\ini_{\tau}(\R(J_G))$. 
	\begin{theorem}\label{rees-Sagbi}
		Let $G$ be a closed graph on $[n]$ with respect to given labelling of vertices. Then, $\ini_{\tau}(\R(J_G)) = \R(\ini_{\sigma}(J_G))$ and $\text{reltype}(J_G) \leq 2$. Moreover, the set $\{x_i, y_i  : 1 \leq i \leq n\} \cup \{f_{e}t :  e \in E(G)\}$ is a Sagbi basis of $\R(J_G)$ with respect to term order $\tau$.
	\end{theorem}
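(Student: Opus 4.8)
The plan is to read off all three assertions from the flat degeneration of $\R(J_G)$ onto $\R(\ini_{\sigma}(J_G))$ that is already available from Theorem \ref{rees-reg}. Since $G$ is closed, \cite[Theorem 1.1]{HH1} gives $\ini_{\sigma}(J_G)=(x_iy_j : i<j,\ \{i,j\}\in E(G))$, and \cite[Equation 3]{ERT} gives $\ini_{\sigma}(J_G^s)=(\ini_{\sigma}(J_G))^s$ for every $s\ge 1$. Inserting the latter into \cite[Theorem 2.7]{CHV} produces $\ini_{\tau}(\R(J_G))=\R(\ini_{\sigma}(J_G))$, which is the first assertion and was already noted in the proof of Theorem \ref{rees-reg}.

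Granting this, the Sagbi-basis claim is essentially a restatement. The set $B=\{x_i,y_i : 1\le i\le n\}\cup\{f_et : e\in E(G)\}$ generates $\R(J_G)$ as a $\KK$-algebra, and its $\tau$-initial forms are $x_i$, $y_i$, and $\ini_{\tau}(f_{\{i,j\}}t)=x_iy_jt$ for $i<j$. Because $\ini_{\sigma}(J_G)$ is monomial with generators $x_iy_j$, these forms are exactly the natural algebra generators of $\R(\ini_{\sigma}(J_G))=\mathop\oplus_{k\ge0}(\ini_{\sigma}(J_G))^kt^k$. By the first assertion they therefore generate $\ini_{\tau}(\R(J_G))$, which is precisely what it means for $B$ to be a Sagbi basis.

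The real content is the bound $\text{reltype}(J_G)\le 2$, which I would obtain by transporting the relation type across the degeneration. Write $\mathcal{L}=\ker\psi$ for the defining ideal of $\R(J_G)$ under $\psi(T_{\{i,j\}})=f_{\{i,j\}}t$; it is homogeneous for the grading of $R$ with $\deg T_{\{i,j\}}=1$ and $\deg x_i=\deg y_i=0$, and $\text{reltype}(J_G)$ is the top degree of a minimal generator in this grading. The degeneration above is induced by a weight on $R$ refining the $t$-degree, so it preserves this grading and identifies the defining ideal of $\R(\ini_{\sigma}(J_G))$ with an initial ideal of $\mathcal{L}$ (the standard lifting of a Sagbi basis to a presentation, cf. \cite{CHV}). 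Upper semicontinuity of the graded Betti numbers $\beta_{1,\bullet}$ under a flat degeneration then forces the top generating degree of $\mathcal{L}$ to be at most that of the defining ideal of $\R(\ini_{\sigma}(J_G))$; that is, $\text{reltype}(J_G)\le\text{reltype}(\ini_{\sigma}(J_G))$.

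It remains to show $\text{reltype}(\ini_{\sigma}(J_G))\le 2$. Here $\ini_{\sigma}(J_G)=I(H)$ is the edge ideal of the bipartite graph $H$ of \cite[Lemma 3.3]{EZ}, so $\R(\ini_{\sigma}(J_G))$ is a toric ring whose defining ideal is binomial and homogeneous in the above grading, with generators of $T$-degree $1$ (linear syzygies $x\,T_e-x'\,T_{e'}$) together with toric relations of $T$-degree equal to half the length of the primitive even closed walks of $H$. Thus $\text{reltype}(\ini_{\sigma}(J_G))\le 2$ is equivalent to the edge subring of $H$ being defined by quadrics, which by the Ohsugi--Hibi criterion amounts to $H$ having no induced cycle of length $\ge 6$. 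I expect this to be the main obstacle: it is a purely combinatorial statement that I would verify from the proper-interval structure of closed graphs, writing a putative induced $6$-cycle $x_{i_1}y_{j_1}x_{i_2}y_{j_2}x_{i_3}y_{j_3}$ in terms of the edge inequalities $i_k<j_k$ and the non-adjacency conditions and deriving a contradiction from the linear order on the vertices, exactly as one does in the model case $G=K_n$. Alternatively, the bound $\text{reltype}(\ini_{\sigma}(J_G))\le 2$ may be extracted directly from the analysis of $\R(\ini_{\sigma}(J_G))$ in \cite{ERT}. Combining this with the previous paragraph yields $\text{reltype}(J_G)\le 2$.
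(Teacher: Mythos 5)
Your proposal follows essentially the same route as the paper: degenerate via \cite[Equation 3]{ERT} and \cite[Theorem 2.7]{CHV} to get $\ini_{\tau}(\R(J_G))=\R(\ini_{\sigma}(J_G))$, read off the Sagbi basis from the fact that the initial forms of your set generate the initial algebra, and bound the relation type by passing to $\R(\ini_{\sigma}(J_G))$. The steps you leave as assertions or ``obstacles'' are exactly the paper's citations: the transfer $\text{reltype}(J_G)\le\text{reltype}(\ini_{\sigma}(J_G))$ is \cite[Corollary 2.8]{CHV}, so your semicontinuity argument is not needed; the claim that the defining ideal of $\R(I(H))$ consists of the linear syzygies together with the fiber relations (i.e., fiber type) is not automatic for a toric Rees algebra and is supplied by \cite[Theorem 3.1]{Vill95} for bipartite graphs; and the combinatorial fact you identify as the main obstacle --- that $H$ has no induced cycle of length $\ge 6$ --- is precisely \cite[Lemma 3.3]{EZ}, which the paper combines with \cite[Theorem 1]{OH2} to get quadratic generation (indeed Koszulness) of $\F(\ini_{\sigma}(J_G))$. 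Nothing in your outline would fail, but the deferred combinatorial verification is a genuine lemma: either carry out the chord-chasing argument you sketch using the proper-interval structure of closed graphs, or cite \cite[Lemma 3.3]{EZ} as the paper does.
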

	\begin{proof}
		Let $H$ be the graph constructed in the proof of Theorem \ref{rees-reg}, i.e. $I(H)=\ini_{\sigma}(J_G)$.  By  \cite[Equation 3]{ERT}, we have $\ini_{\sigma}(J_G^s) =(\ini_{\sigma}(J_G))^s$, for all $s \geq 1$. Now,  \cite[Theorem 2.7]{CHV} yields that $\ini_{\tau}(\R(J_G)) = \R(\ini_{\sigma}(J_G))$. By \cite[Lemma~3.3]{EZ}, $H$ is a bipartite graph and every induced cycle in $H$ has length $4$.    Consequently,  $\F(\ini_{\sigma}(J_G))$ is a Koszul algebra, by \cite[Theorem 1]{OH2}. Therefore, the defining ideal of $\F(\ini_{\sigma}(J_G))$ is generated in degree at most two. It follows from \cite[Theorem 3.1]{Vill95} that $\R(\ini_{\sigma}(J_G))$ is of fiber type. Thus,  $\text{reltype}(\ini_{\sigma}(J_G))\leq 2$, and hence, by \cite[Corollary 2.8]{CHV}, $\text{reltype}(J_G)\leq 2$. Since $\ini_{\tau}(\R(J_G)) = \R(\ini_{\sigma}(J_G))$ and  $\{x_i, y_i  : 1 \leq i \leq n\} \cup \{\ini_{\sigma}(f_{e})t :  e \in E(G)\}$ generate $\R(\ini_{\sigma}(J_G))$ as $\K$-algebra, the set $\{x_i, y_i  : 1 \leq i \leq n\} \cup \{f_{e}t :  e \in E(G)\}$ is a Sagbi basis of $\R(J_G)$.
	\end{proof}
	We have seen in the above theorem that binomial edge ideals of closed graphs are of quadratic type. Now, we characterize linear type closed graphs. 
	\begin{corollary}
		Let $G$ be a closed graph on $[n]$ with respect to given labelling of vertices. Then, $J_G$ is of linear type if and only if $G$ is a $K_4$-free graph. 
	\end{corollary}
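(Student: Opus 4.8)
The plan is to reduce the statement, in two algebraic steps, to a purely combinatorial fact about the bipartite graph $H$ with $I(H)=\ini_{\sigma}(J_G)$ from the proof of Theorem \ref{rees-reg}, and then to prove that fact. We may assume $G$ has no isolated vertices, since these affect neither $J_G$ nor the property of being $K_4$-free; moreover $K_4$-freeness is detected on connected components while $H=H_1\sqcup\cdots\sqcup H_c$ decomposes accordingly, so it suffices to argue componentwise. For the first step I would recall that $J_G$ is of linear type if and only if $\text{reltype}(J_G)=1$. By Theorem \ref{rees-Sagbi}, $J_G$ is of fiber type with $\text{reltype}(J_G)\le 2$, so the defining ideal of $\R(J_G)$ is generated by the linear relations of the symmetric algebra together with the defining relations of the special fiber ring $\F(J_G)$. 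Since $J_G$ is equigenerated and the $f_e$ are a minimal generating set, the fiber relations, when nonzero, are minimal generators of $t$-degree at least $2$; hence $\text{reltype}(J_G)=1$ precisely when $\F(J_G)$ has no defining relations, i.e. when $\F(J_G)$ is a polynomial ring.

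The second step transfers this to the initial algebra. Because $\{x_i,y_i:1\le i\le n\}\cup\{f_e t:e\in E(G)\}$ is a Sagbi basis of $\R(J_G)$ (Theorem \ref{rees-Sagbi}), the fiber rings $\F(J_G)$ and $\F(\ini_{\sigma}(J_G))=\KK[H]$ are minimally generated by the same number $|E(G)|$ of elements and share the same Hilbert function, hence the same Krull dimension. Both are affine domains, and a finitely generated graded domain minimally generated by $m$ elements is a polynomial ring iff its dimension equals $m$; therefore $\F(J_G)$ is a polynomial ring if and only if the edge ring $\KK[H]$ is. Finally, $\KK[H]$ is a polynomial ring if and only if its $|E(H)|$ generators are algebraically independent, which for the bipartite graph $H$ holds exactly when $H$ is a forest (each bipartite connected component on $p$ vertices contributes $p-1$ to $\dim\KK[H]$, so equality of dimension and edge number forces every component to be a tree).

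It then remains to prove the combinatorial equivalence: $H$ is a forest if and only if $G$ is $K_4$-free. If $G$ contains a $K_4$ on vertices $i<j<k<l$, then $x_i-y_k-x_j-y_l-x_i$ is a $4$-cycle in $H$, so $H$ is not a forest. Conversely, if $H$ is not a forest it contains a chordless cycle, which by \cite[Lemma 3.3]{EZ} has length $4$, say $x_a-y_b-x_c-y_d-x_a$. Reading off the corresponding edges of $G$ yields, after noting $a\ne c$ and $b\ne d$, the relations $\{a,b\},\{b,c\},\{c,d\},\{a,d\}\in E(G)$ with $a<b$, $c<b$, $a<d$, $c<d$, forcing the four labels $a,b,c,d$ to be pairwise distinct. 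Taking $a<c$ without loss of generality and splitting into the cases $b<d$ and $d<b$, in each case one of the edges $\{a,d\}$ or $\{a,b\}$ joins the smallest label to the largest, so its label-interval contains all four vertices. The key input is the interval-clique property of closed graphs, namely that whenever $\{p,q\}\in E(G)$ with $p<q$ the whole interval $\{p,p+1,\dots,q\}$ is a clique (this follows from the standard characterization of closed graphs, cf.\ \cite{her1}); applying it to that spanning edge shows $\{a,b,c,d\}$ is a clique, i.e.\ a $K_4$ in $G$, a contradiction.

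I expect the last paragraph to be the main obstacle: converting a $4$-cycle of $H$ back into a $K_4$ of $G$. The delicate points are to verify that the four labels read off the cycle are genuinely distinct and to apply the interval-clique property in the correct orientation, so that exactly the two edges $\{a,c\}$ and $\{b,d\}$ missing from the cycle are forced to lie in $G$.
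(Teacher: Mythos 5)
Your overall strategy is sound and genuinely different from the paper's. The paper disposes of the forward implication by citing \cite[Proposition 5.7]{AR4}, and of the converse by observing that $H$ is a forest when $G$ is $K_4$-free and then invoking \cite[Corollary 3.2]{Vill95} for linear type of $I(H)$ together with \cite[Corollary 2.8]{CHV} to transfer it to $J_G$. Your reduction of ``linear type'' to ``$\F(J_G)$ is a polynomial ring,'' the transfer to $\KK[H]$ via equality of Hilbert functions and the dimension criterion for graded domains generated in degree one, and the explicit combinatorial argument that $H$ contains a (necessarily induced, length-$4$) cycle exactly when $G$ contains a $K_4$ are all correct; in particular you supply a proof of the equivalence ``$H$ is a forest $\iff$ $G$ is $K_4$-free'' that the paper merely asserts, and your forward direction avoids the external citation to \cite{AR4}.

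The one genuine gap is the opening claim that ``by Theorem \ref{rees-Sagbi}, $J_G$ is of fiber type.'' Theorem \ref{rees-Sagbi} proves that $\R(\ini_{\sigma}(J_G))$ is of fiber type and deduces only $\text{reltype}(J_G)\le 2$; it does not assert fiber type for $J_G$ itself, and $\text{reltype}\le 2$ alone does not imply that the $t$-degree-$2$ minimal generators of $\ker(\psi)$ can be chosen inside $\KK[T_e]$. This matters precisely for your converse direction: from ``$\F(J_G)$ is a polynomial ring'' you cannot conclude $\text{reltype}(J_G)=1$ without knowing that the quadratic generators are fiber relations. (Your forward direction survives without fiber type: if $J_G$ is of linear type, every $t$-linear element of $\ker(\psi)$ has coefficients in $\mathfrak{m}$ because the $f_e$ minimally generate $J_G$, so $\ker(\psi)\subseteq\mathfrak{m}R$ and $\F(J_G)$ is a polynomial ring.) The repair is short and is what the paper does: once $H$ is known to be a forest, $I(H)=\ini_{\sigma}(J_G)$ is of linear type by \cite[Corollary 3.2]{Vill95}, and \cite[Corollary 2.8]{CHV} gives $\text{reltype}(J_G)\le\text{reltype}(\ini_{\sigma}(J_G))=1$. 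Alternatively one could prove fiber type of $J_G$ by a separate Sagbi-deformation argument, but that is established neither in the paper nor in your proposal, so as written the converse direction is incomplete.
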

	\begin{proof}
		Assume that $J_G$ is of linear type. By \cite[Proposition 5.7]{AR4}, $G$ is a $K_4$-free graph. Conversely, we assume that $G$ is a $K_4$-free graph. Let $H$ be the graph constructed in the proof of Theorem \ref{rees-reg}, i.e. $I(H)=\ini_{\sigma}(J_G)$. Then, $H$ is a forest. By \cite[Corollary 3.2]{Vill95}, $\ini_{\sigma}(J_G)$ is of linear type, and hence, by \cite[Corollary 2.8]{CHV}, $J_G$ is of linear type.
	\end{proof} 
	\begin{theorem}
		Let $G$ be a closed graph on $[n]$. If $\text{char}(\K) =0$, then $\R(J_G)$ has rational singularities, and if $\text{char}(\K)>0$, then $\R(J_G)$ is  $F$-rational. In particular, $\R(J_G)$ is a Cohen-Macaulay normal domain. 
	\end{theorem}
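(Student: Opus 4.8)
The plan is to transfer the singularity properties from the initial algebra $\ini_{\tau}(\R(J_G))$, which we understand completely, to $\R(J_G)$ itself by exploiting the Sagbi degeneration. By Theorem~\ref{rees-Sagbi} we have $\ini_{\tau}(\R(J_G)) = \R(\ini_{\sigma}(J_G)) = \R(I(H))$, where $H$ is the bipartite graph with $I(H) = \ini_{\sigma}(J_G)$ constructed in the proof of Theorem~\ref{rees-reg}. Thus the argument splits into two parts: (i) establishing that $\R(I(H))$ has rational singularities when $\text{char}(\K)=0$ and is $F$-rational when $\text{char}(\K)>0$, and (ii) showing that these properties are inherited by the generic fiber $\R(J_G)$ of the flat family defined by the degeneration.

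For step (i), I would first observe that $\R(I(H))$ is a normal affine semigroup ring. Since $H$ is bipartite, the edge ideal $I(H)$ is normally torsion-free (the theorem of Simis--Vasconcelos--Villarreal), which is equivalent to the normality of $\R(I(H))$; being the Rees algebra of a squarefree monomial ideal, $\R(I(H))$ is a monomial subalgebra of a polynomial ring, hence an affine semigroup ring, and normality makes it a \emph{normal} affine semigroup ring. By Hochster's theorem such a ring is Cohen--Macaulay. Moreover, a normal affine semigroup ring has rational singularities when $\text{char}(\K)=0$ and is $F$-regular, in particular $F$-rational, when $\text{char}(\K)>0$. This settles the initial-algebra case.

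For step (ii), the Sagbi basis of $\R(J_G)$ exhibited in Theorem~\ref{rees-Sagbi} yields a flat one-parameter family over $\mathbb{A}^1_{\K}$ whose special fiber is $\ini_{\tau}(\R(J_G)) = \R(I(H))$ and whose generic fiber is $\R(J_G)$. When $\text{char}(\K)=0$, rational singularities form an open condition in flat families (Elkik), so a special fiber with rational singularities forces the generic fiber $\R(J_G)$ to have rational singularities as well. When $\text{char}(\K)>0$, I would invoke the corresponding deformation statement for $F$-rationality: if the special fiber of such a flat family is $F$-rational, then so is the generic fiber. In either characteristic $\R(J_G)$ thereby acquires the desired singularity type.

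Finally, the \emph{In particular} clause is immediate: both rational singularities and $F$-rationality imply Cohen--Macaulayness and normality, and $\R(J_G)$ is a domain since it is a subalgebra of the integral domain $S[t]$. The delicate point, and the one I expect to demand the most care, is step (ii): making precise that the Sagbi degeneration really produces a flat family over a one-dimensional base and invoking the correct semicontinuity/deformation theorem in each characteristic, so that the good behaviour of the special fiber genuinely propagates to $\R(J_G)$.
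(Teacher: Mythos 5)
Your proposal is correct and follows essentially the same route as the paper: identify $\ini_{\tau}(\R(J_G))$ with $\R(I(H))$ for the bipartite graph $H$, establish normality of this Rees algebra via Simis--Vasconcelos--Villarreal, and transfer rational singularities/$F$-rationality across the Sagbi degeneration. The only difference is that the paper packages your steps (i) and (ii) into the single citation \cite[Corollary 2.3]{CHV}, which requires only that the (toric) initial algebra be Cohen--Macaulay and normal, whereas you unwind that corollary's proof (normal affine semigroup rings plus Elkik and the deformation of $F$-rationality) by hand.
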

	\begin{proof}
		Assume, without loss of generality, that $G$ is closed with respect to given labelling of vertices. Let $H$ be the graph constructed in the proof of Theorem \ref{rees-reg}, i.e. $I(H)=\ini_{\sigma}(J_G)$. By \cite[Lemma~3.3]{EZ}, $H$ is a bipartite graph. It follows from \cite[Corollary 5.3, Theorem 5.9]{SVV} that $\R(\ini_{\sigma}(J_G))$ is a Cohen-Macaulay normal domain. Now,  Theorem \ref{rees-Sagbi} yields that $\ini_{\tau}(\R(J_G)) = \R(\ini_{\sigma}(J_G))$. Thus, $\ini_{\tau}(\R(J_G))$ is a Cohen-Macaulay normal domain. Hence, the assertion follows from \cite[Corollary 2.3]{CHV}.
	\end{proof}

	\section{Special fiber ring of binomial edge ideals of closed graphs} In this section, we study the special fiber ring of binomial edge ideals of closed graphs. We begin with definitions.  Let $\mathfrak{m}$ denote the unique homogeneous maximal ideal of $S$. The \textit{special fiber ring} of  a homogeneous ideal $I$ is the ring $\displaystyle \mathcal{F}(I)= \R(I)/\mathfrak{m}\R(I) \cong \mathop\oplus_{k \geq 0} I^k/\mathfrak{m}I^{k}$. The \textit{analytic spread} of $I$ is the Krull dimension of $\F(I)$, and it is denoted by $\ell(I)$. 
	\begin{theorem}\label{normal}
		Let $G$ be a connected closed graph on $[n]$. Then,
		\begin{enumerate}
			\item  $\F(J_G)$ is a Koszul algebra.
			\item if $\text{char}(\K)=0$, then $\F(J_G)$ has rational singularities.
			\item if $\text{char}(\K)>0$, then $\F(J_G)$ is $F$-rational.
			\item $\F(J_G)$ is a  Cohen-Macaulay normal domain.
			\item $\ell(J_G)= 2n-r-2$, where $r $ is the number of indecomposable components of $G$.
			\item $\ell(J_G)=|E(G)|$ if and only if $\omega(G) \leq 3$. 
		\end{enumerate} 
	\end{theorem}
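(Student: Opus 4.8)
My plan is to treat all six parts uniformly by degenerating $\F(J_G)$ to its initial algebra. Specialising the Sagbi basis of Theorem~\ref{rees-Sagbi} to the fiber (killing $\mathfrak{m}$) shows that $\{f_e : e \in E(G)\}$ is a Sagbi basis of $\F(J_G) = \KK[f_e : e \in E(G)]$ and that $\ini_{\sigma}(\F(J_G)) = \F(\ini_{\sigma}(J_G)) = \KK[x_iy_j : \{i,j\}\in E(G),\ i<j] = \KK[H]$, the edge subring of the bipartite graph $H$ constructed in the proof of Theorem~\ref{rees-reg}. I will use repeatedly that, by \cite[Lemma 3.3]{EZ}, $H$ is bipartite and every induced cycle of $H$ has length $4$, so $H$ is chordal bipartite. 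All of (1)--(6) are then read off from the toric ring $\KK[H]$ via the Sagbi/initial-algebra comparison of \cite{CHV}.

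For (1) I would argue that, since $H$ is bipartite and every cycle of length $\geq 6$ has a chord, \cite[Theorem 1]{OH2} gives that the toric defining ideal of $\KK[H]$ has a quadratic Gr\"obner basis; lifting this quadratic Gr\"obner basis through the Sagbi basis of $\F(J_G)$ endows the defining ideal of $\F(J_G)$ with a quadratic Gr\"obner basis as well, whence $\F(J_G)$ is Koszul. For (2)--(4), note that $\KK[H]$ is a normal affine semigroup ring, hence Cohen--Macaulay, and is a normal domain by \cite[Corollary 5.3, Theorem 5.9]{SVV}; as a normal toric ring it has rational singularities when $\operatorname{char}\KK = 0$ and is $F$-rational when $\operatorname{char}\KK > 0$. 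Because $\ini_{\sigma}(\F(J_G)) = \KK[H]$, \cite[Corollary 2.3]{CHV} transports each of these four properties to $\F(J_G)$ simultaneously, exactly as in the analogous statement for $\R(J_G)$.

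For (5) I would use that the analytic spread is $\dim \F(J_G)$, and that $\F(J_G)$ and $\KK[H]$ share a Hilbert function by \cite[Proposition 2.4]{CHV}, so $\ell(J_G) = \dim \F(J_G) = \dim \KK[H]$. The crux is to match the connected components of $H$ with the indecomposable components of $G$. Writing $G = G_1 \cup \cdots \cup G_r$ for the decomposition of \cite{Rauf}, whose pieces occupy consecutive intervals of the labelling meeting in single simplicial vertices, I would check that a shared vertex $v = b_i = a_{i+1}$ occurs in $H$ only as $y_v$ coming from the lower component (where $v$ is a maximal endpoint) and as $x_v$ coming from the upper component (where $v$ is a minimal endpoint); since $x_v \neq y_v$ in $H$, the graph splits as $H = H_1 \sqcup \cdots \sqcup H_r$ with $I(H_i) = \ini_{\sigma}(J_{G_i})$. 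Indecomposability of $G_i$ forces consecutive maximal cliques of $G_i$ to overlap in at least two vertices, and then a short argument (each $x_k$ joins $y_q$ for the top $q$ of its clique, and an overlap vertex supplies a common $x$-vertex of consecutive cliques) shows each $H_i$ is connected. Hence $H$ has exactly $r$ components, and since a connected bipartite $H_i$ gives $\dim \KK[H_i] = |V(H_i)| - 1$, I obtain $\dim \KK[H] = |V(H)| - r = (2n-2) - r$, i.e. $\ell(J_G) = 2n - r - 2$.

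For (6) I would combine (5) with the fact that $\F(J_G)$ is minimally generated by the $|E(G)|$ elements $f_e$, so $\ell(J_G) = |E(G)|$ exactly when $\KK[H]$ is a polynomial ring, i.e. when $\dim \KK[H] = |E(H)|$; for a graph with $s$ components the equality $|E(H)| = |V(H)| - s$ holds iff $H$ is a forest. As $H$ is chordal bipartite, it has a cycle iff it has an induced $4$-cycle, and using the closedness condition of \cite{HH1} on the labelling I would show an induced $4$-cycle $x_i\,y_j\,x_k\,y_l$ forces $\{i,j,k,l\}$ to span a $K_4$ in $G$, with the converse immediate, so $H$ is a forest iff $G$ is $K_4$-free iff $\omega(G) \leq 3$. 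The main obstacle is precisely the combinatorial lemma of (5): that $H$ breaks, across the indecomposable decomposition of $G$, into exactly $r$ connected bipartite pieces; this is where the proper-interval geometry of closed graphs and the asymmetric $x$/$y$ roles of the shared simplicial vertices must be handled with care, after which (5) and (6) become bookkeeping and (1)--(4) are applications of \cite{CHV}, \cite{OH2} and \cite{SVV}.
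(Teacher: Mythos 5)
Your proposal follows essentially the same route as the paper: pass to the initial algebra $\ini_{\sigma}(\F(J_G))=\F(\ini_{\sigma}(J_G))=\KK[H]$ via the Sagbi basis (the paper cites \cite[Theorem 2.10]{ERT} directly rather than specialising Theorem \ref{rees-Sagbi}), deduce (1)--(4) from \cite{OH2}, \cite{OH1}/\cite{SVV} and the transfer results of \cite{CHV}, and obtain (5)--(6) by matching the connected components of $H$ with the indecomposable components of $G$ and counting. The additional combinatorial detail you supply for the splitting $H=H_1\sqcup\cdots\sqcup H_r$ is correct; the paper simply asserts it.
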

	\begin{proof} Assume, without loss of generality, that $G$ is  closed with respect to given labelling of vertices.
		By \cite[Theorem 2.10]{ERT}, $\{f_{e} : e \in E(G)\}$ is a Sagbi basis of the $\K$-algebra $\F(J_G)$ with respect to the term order $\sigma$ on $S,$ i.e.,  \[\ini_{\sigma}(\F(J_G))=\F(\ini_{\sigma}(J_G)).\] \par (1) Let $H$ be the graph constructed in the proof of Theorem \ref{rees-reg}, i.e. $I(H)=\ini_{\sigma}(J_G)$. It follows from \cite[Lemma~3.3]{EZ} that $H$ is a bipartite graph and every induced cycle in $H$ has length $4.$ Thus, by \cite[Theorem 1]{OH2}, $\F(\ini_{\sigma}(J_G))$ is a Koszul algebra. Hence, by \cite[Corollary 2.6]{CHV}, $\F(J_G)$ is a Koszul algebra. 
		
		\par (2-4) By \cite[Corollary 1.3]{OH1},  $\F(\ini_{\sigma}(J_G))$ is normal. Now, the assertion follows from \cite[Corollary 2.3]{CHV}. 
		
		\par (5) By \cite[Proposition 2.4]{CHV}, $\ell(J_G) =\ell(\ini_{\sigma}(J_G))$. Therefore, it is enough to find $\ell(\ini_{\sigma}(J_G))$. Let $G_1, \ldots, G_r$ be the indecomposable components of $G$. Note that  $H=H_1 \sqcup \ldots \sqcup H_r$, where $H_k$ is the connected bipartite graph such that $I(H_k) =\ini_{\sigma}(J_{G_k})$. Then, $\displaystyle\F(I(H)) = \F(I(H_1)) \otimes_{\KK} \cdots \otimes_{\KK} \F(I(H_r))$. It follows from \cite[Lemma 3.1, Proposition 3.2]{Vill95} that $\displaystyle\ell(I(H)) = \sum\limits_{i=1}^r |V(H_i)| -r=\sum\limits_{i=1}^r (2|V(G_i)| -2) -r=2\sum\limits_{i=1}^r |V(G_i)| -3r=2(n+r-1)-3r=2n-r-2.$ 
		
		\par (6) Note that $H$ is a forest if and only if $\omega(G) \leq 3$. If $\omega(G) \leq 3$, then $H$ is a forest, and therefore, $\ell(J_G) =\ell(I(H))=|V(H)|-r= |E(H)| =|E(G)|$. If $ \omega(G) \geq 4$, then $H$ is not a forest, and therefore, $\ell(J_G) =\ell(I(H))=|V(H)|-r < |E(H)| =|E(G)|$. Hence, the assertion follows.
	\end{proof}
	As an immediate consequence, we obtain the following:
	\begin{corollary}\label{fiber-cor}
		Let $G=G_1\sqcup \cdots \sqcup G_c$ be a closed graph on $[n]$. Then, 
		\begin{enumerate}
			\item  $\F(J_G)$ is a Koszul algebra.
			\item if $\text{char}(\K)=0$, then $\F(J_G)$ has rational singularities.
			\item if $\text{char}(\K)>0$, then $\F(J_G)$ is $F$-rational.
			\item $\F(J_G)$ is a  Cohen-Macaulay normal domain.
			\item $\ell(J_G)= 2n-r-2c$, where $r $ is the number of indecomposable components of $G$.
			\item $\ell(J_G)=|E(G)|$ if and only if $\omega(G) \leq 3$. 
		\end{enumerate} 
	\end{corollary}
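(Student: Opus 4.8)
The plan is to reduce everything to the connected case already settled in Theorem \ref{normal} by exploiting the disjointness of the components. Since $G = G_1 \sqcup \cdots \sqcup G_c$ and the binomial edge ideals $J_{G_1}, \ldots, J_{G_c}$ involve pairwise disjoint sets of variables inside $S = \KK[x_i, y_i : i \in [n]]$, the products $J_G^k$ split multiplicatively across the components. Consequently the special fiber ring factors as a tensor product over $\KK$,
$$\F(J_G) \cong \F(J_{G_1}) \otimes_{\KK} \cdots \otimes_{\KK} \F(J_{G_c}),$$
and simultaneously $\F(J_G)$ is realized as the $\KK$-subalgebra of $S$ generated by $\{f_e : e \in E(G)\}$. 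I would first record this decomposition carefully, as it is the engine driving all six assertions.

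With the decomposition in hand, statements (1)--(4) follow by transferring the corresponding properties across the tensor product. Each factor $\F(J_{G_k})$ is Koszul, has rational singularities (respectively is $F$-rational), and is a Cohen--Macaulay normal domain by Theorem \ref{normal}. A finite tensor product over $\KK$ of Koszul algebras is Koszul, of Cohen--Macaulay rings is Cohen--Macaulay, and in the relevant characteristics a tensor product of rings with rational singularities (respectively $F$-rational rings) retains that property; the domain and normality assertions are immediate from the embedding $\F(J_G) \hookrightarrow S$, which also bypasses the usual subtleties of tensor products of domains over a non-closed field.

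For (5), I would use that the analytic spread is the Krull dimension of the special fiber ring and that dimension is additive over tensor products over a field, so that $\ell(J_G) = \dim \F(J_G) = \sum_{k=1}^c \dim \F(J_{G_k}) = \sum_{k=1}^c \ell(J_{G_k})$. Writing $n_k = |V(G_k)|$ and $r_k$ for the number of indecomposable components of $G_k$, Theorem \ref{normal}(5) gives $\ell(J_{G_k}) = 2n_k - r_k - 2$; summing and using $\sum_k n_k = n$ and $\sum_k r_k = r$ yields $\ell(J_G) = 2n - r - 2c$. Equivalently, one can run the bipartite-graph computation of Theorem \ref{normal} directly on $H = H_1 \sqcup \cdots$, which now has $r$ connected components, and obtain the same total $|V(H)| - r = 2n - r - 2c$.

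Finally, (6) follows by combining (5) with the component-wise criterion. The generators $\{f_e : e \in E(G)\}$ show $\ell(J_G) \leq |E(G)|$, and since both sides are additive over the disjoint union, equality holds if and only if $\ell(J_{G_k}) = |E(G_k)|$ for every $k$. By Theorem \ref{normal}(6) this happens exactly when $\omega(G_k) \leq 3$ for all $k$, i.e. when $\omega(G) = \max_k \omega(G_k) \leq 3$. The only point requiring care throughout is the justification that Koszulness, rational singularities, $F$-rationality and Cohen--Macaulayness are preserved under tensor products over $\KK$; once these standard facts are invoked, the corollary is a formal consequence of Theorem \ref{normal}.
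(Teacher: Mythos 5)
Your proposal is correct and follows essentially the same route as the paper: the paper's proof is precisely the observation that $\F(J_G) \cong \F(J_{G_1}) \otimes_{\KK} \cdots \otimes_{\KK} \F(J_{G_c})$ together with an appeal to Theorem \ref{normal} for each connected component. You simply spell out in more detail the transfer of Koszulness, Cohen--Macaulayness, normality, rational singularities/$F$-rationality, and the additivity of Krull dimension and of the count in (5) and (6) across the tensor factors, which the paper leaves implicit.
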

	\begin{proof}
		The assertion follows from the fact that $\displaystyle\F(J_G) = \F(J_{G_1}) \otimes_{\KK} \cdots \otimes_{\KK} \F(J_{G_c})$, and by Theorem \ref{normal}.
	\end{proof}
	
	We now obtain an upper bound for the regularity of special fiber ring of binomial edge ideals of closed graphs. Let $ \{f_1,\ldots,f_m\}$ be a minimal homogeneous generating set of an equi-generated homogeneous ideal $I$, and $Q=\KK[t_1,\ldots,t_m]$ be a standard graded polynomial ring over $\KK$.
	Let $\phi : Q \to S$ be the $\KK$-algebra homomorphism given by
	$\phi(t_{i}) = f_{i}$ for all  $i$. Then, $Q/\ker (\phi) \simeq$  Im$(\phi) = \F(I)$, where $\ker (\phi)$ is a homogeneous ideal of $Q$  and it is 
	called the \textit{defining ideal} of $\F(I)$. 
	
	\begin{notation}
		Let $I$ be an equi-generated homogeneous ideal in $S$. Then, by $\reg(\F(I))$, we mean the regularity of $Q/\ker(\phi)$ as $Q$-graded module. 
	\end{notation}

	\begin{theorem}\label{fiber-reg}
		Let $G$ be a closed graph on $[n]$ with respect to given labelling of vertices.  Assume that $G$ has no isolated vertices.  Then, $\reg(\F(J_G)) =\reg(\F(\ini_{\sigma}(J_G)))\leq  n-2c$, where $c$ is the number of components of $G$. 
	\end{theorem}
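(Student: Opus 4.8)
The plan is to reproduce, almost verbatim, the Sagbi-deformation argument of Theorem \ref{main-rees-reg} to obtain the equality $\reg(\F(J_G)) = \reg(\F(\ini_{\sigma}(J_G)))$, and then to reduce the upper bound to a single connected bipartite edge ring via tensor factorization. For the first equality I would argue as follows. By \cite[Theorem 2.10]{ERT}, recalled in the proof of Theorem \ref{normal}, the set $\{f_e : e \in E(G)\}$ is a Sagbi basis of $\F(J_G)$ with respect to $\sigma$, so $\ini_{\sigma}(\F(J_G)) = \F(\ini_{\sigma}(J_G))$; moreover $\F(\ini_{\sigma}(J_G))$ is a normal Cohen--Macaulay domain, as established in the proof of Theorem \ref{normal}. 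Exactly as in Theorem \ref{main-rees-reg}, \cite[Corollary 2.3]{CHV} then forces $\F(J_G)$ to be Cohen--Macaulay of the same Krull dimension, while \cite[Proposition 2.4]{CHV} gives $\HS_{\F(J_G)}(\lambda)=\HS_{\ini_{\sigma}(\F(J_G))}(\lambda)$, so the two $h$-polynomials coincide. Since a standard graded Cohen--Macaulay algebra has regularity equal to the degree of its $h$-polynomial (see \cite[Section 4.1]{Ene}), the equality $\reg(\F(J_G)) = \reg(\F(\ini_{\sigma}(J_G)))$ follows.

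It then remains to bound $\reg(\F(\ini_{\sigma}(J_G)))$. Here I would identify $\F(\ini_{\sigma}(J_G))$ with the edge ring $\KK[H]$ of the bipartite graph $H$ satisfying $I(H)=\ini_{\sigma}(J_G)$, since the special fiber ring of a monomial ideal is its monomial subalgebra. Taking $G_1,\dots,G_r$ to be the indecomposable components of $G$ (over all connected components of $G$), on $n_1,\dots,n_r$ vertices respectively, we have $H=H_1\sqcup\cdots\sqcup H_r$ with $I(H_k)=\ini_{\sigma}(J_{G_k})$ and each $H_k$ a connected bipartite graph, so $\KK[H]=\KK[H_1]\otimes_{\KK}\cdots\otimes_{\KK}\KK[H_r]$. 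Because regularity is additive over tensor products of standard graded $\KK$-algebras, this yields $\reg(\KK[H])=\sum_{k=1}^{r}\reg(\KK[H_k])$. A vertex count as in the proof of Theorem \ref{normal}(5), applied to each connected component of $G$ and summed, gives $\sum_{k=1}^{r} n_k = n + (r-c)$.

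The heart of the matter, and the step I expect to be the main obstacle, is the per-component estimate $\reg(\KK[H_k]) \le n_k - 2$. Each $H_k$ is connected bipartite with balanced bipartition of size $n_k - 1$, and, because $\{j,j+1\}\in E(G_k)$ for all $j$ in the closed labelling, it carries a perfect matching of size $n_k-1$, so $\text{mat}(H_k)=n_k-1$. I would prove the estimate in the sharp form $\reg(\KK[H_k]) \le \text{mat}(H_k)-1$ by analysing the degree of the $h$-polynomial of the normal Cohen--Macaulay ring $\KK[H_k]$ (equivalently, the $h^\ast$-polynomial of its edge polytope, whose degree is governed by the smallest dilate carrying an interior lattice point), with the matching structure furnishing the lower bound on that codegree; this parallels the identity $\reg(\R(I(H_k)))=\text{mat}(H_k)$ of \cite[Theorem 4.2]{Cid} used in Theorem \ref{rees-reg}, and the polyhedral verification of the matching bound is where the real work lies. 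Granting this estimate, summation completes the argument:
\[
\reg(\F(\ini_{\sigma}(J_G))) = \sum_{k=1}^{r}\reg(\KK[H_k]) \le \sum_{k=1}^{r}(n_k-2) = (n+r-c)-2r = n-r-c \le n-2c,
\]
the final inequality holding because each connected component of $G$ contains at least one indecomposable component, whence $r\ge c$. Combined with the first equality, this proves $\reg(\F(J_G)) = \reg(\F(\ini_{\sigma}(J_G))) \le n-2c$.
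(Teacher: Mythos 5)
Your first half---the Sagbi-deformation argument giving $\reg(\F(J_G)) = \reg(\F(\ini_{\sigma}(J_G)))$ via \cite[Theorem 2.10]{ERT}, \cite[Corollary 2.3]{CHV}, the equality of Hilbert series from \cite[Proposition 2.4]{CHV}, and the identity $\reg = \deg h$ for Cohen--Macaulay standard graded algebras---is exactly the paper's argument. The gap is in the second half. You correctly isolate the crux as the estimate $\reg(\KK[H_k]) \leq \operatorname{mat}(H_k) - 1$ for the connected bipartite edge rings $\KK[H_k]$, but you do not prove it: you only sketch an Ehrhart-theoretic strategy (degree of the $h^{*}$-polynomial of the edge polytope, codegree controlled by the matching) and explicitly concede that ``the polyhedral verification \ldots is where the real work lies.'' That inequality cannot be waved through; it is precisely the content of Herzog and Hibi's theorem relating the regularity of edge rings to matching numbers, \cite[Theorem 1]{HH202}, which is exactly what the paper invokes at this point, applied to $H$ with its perfect matching $\{\{x_i, y_{i+1}\} : 1 \leq i \leq n-1\}$ of size $n-1$. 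As written, your proposal reduces the theorem to an unproved and nontrivial lemma, so it is incomplete.

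A secondary remark: where the paper applies the matching bound once per connected component of $G$ (obtaining $|V(G_i)|-2$ for each and summing to $n-2c$), you refine the decomposition to the $r$ indecomposable components, and your bookkeeping ($\sum_k n_k = n + r - c$, additivity of regularity over $\otimes_{\KK}$, and $r \geq c$) is correct. Granting the Herzog--Hibi estimate, your route would in fact deliver the slightly sharper bound $\reg(\F(J_G)) \leq n - r - c$. This refinement is a genuine (minor) improvement in the conclusion, but it does not compensate for the missing proof of the key per-component inequality.
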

	\begin{proof}
		First, we assume that $G$ is a connected graph. It follows from the proof of Theorem \ref{normal} that     $\F(J_G)$ and $\F(\ini_{\sigma}(J_G))$ are  Cohen-Macaulay, and $\ell(\F(J_G))= \ell(\F(\ini_{\sigma}(J_G)))$. By \cite[Theorem 2.10]{ERT}, $\ini_{\sigma}(\F(J_G))=\F(\ini_{\sigma}(J_G)).$ Now, it follows from \cite[Proposition 2.4]{CHV} that $$\HS_{\F(J_G)}(\lambda) =\HS_{\ini_{\sigma}(\F(J_G))}(\lambda)=\HS_{\F(\ini_{\sigma}(J_G))}(\lambda),$$ and therefore, $$h_{\F(J_G)}(\lambda)=h_{\F(\ini_{\sigma}(J_G))}(\lambda).$$   Thus, $\reg(\F(J_G)) =\deg h_{\F(J_G)}(\lambda)=\deg h_{\F(\ini_{\sigma}(J_G))}(\lambda)=\reg(\F(\ini_{\sigma}(J_G)))$. 
		
		Let $H$ be the graph constructed in the proof of Theorem \ref{rees-reg}, i.e. $I(H)=\ini_{\sigma}(J_G)$. Now, it follows from \cite[Theorem 1]{HH202} that $\reg(\F(I(H))) \leq \text{mat}(H)-1=n-2$, as $\{ \{ x_i,y _{i+1}\} : 1 \leq i \leq n-1 \}$ is a perfect matching of $H$. Hence, $\reg(\F(J_G)) \leq  n-2$.
		
		Now, let $G=G_1\sqcup \cdots \sqcup G_c$. Then, $\displaystyle\F(J_G) = \F(J_{G_1}) \otimes_{\KK} \cdots \otimes_{\KK} \F(J_{G_c})$, and $\displaystyle\F(\ini_{\sigma}(J_G)) = \F(\ini_{\sigma}(J_{G_1})) \otimes_{\KK} \cdots \otimes_{\KK} \F(\ini_{\sigma}(J_{G_c}))$ which imples that    $\displaystyle\reg(\F(J_G))=\sum_{i=1}^c \reg(\F(J_{G_i})) =\sum_{i=1}^c \reg(\F(\ini_{\sigma}(J_{G_i})))=\reg(\F(\ini_{\sigma}(J_G))).$ Thus, $\reg(\F(J_G))=\reg(\ini_{\sigma}(J_G)) \leq \sum_{i=1}^c (|V(G_i)|-2)=n-2c.$ Hence, the assertion follows.   
	\end{proof}
	As an immediate consequence, we obtain the following:
	\begin{corollary}\label{reg-fiber}
		Let $G$ be a closed graph on  $[n]$. Assume that $G$ has no isolated vertices. Then, $\reg(\F(J_G)) \leq n-2c$, where $c$ is the number of connected components of $G$.
	\end{corollary}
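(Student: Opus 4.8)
The plan is to deduce this directly from Theorem \ref{fiber-reg}, whose only additional hypothesis is that $G$ be closed \emph{with respect to the given labelling} of $[n]$, whereas in the corollary $G$ is merely assumed to be a closed graph, i.e.\ closed with respect to \emph{some} labelling. So the entire mathematical content already lives in Theorem \ref{fiber-reg}, and the task is to check that passing to a favourable labelling changes neither the left-hand side nor the right-hand side of the inequality. First I would fix a labelling $\pi$ of the vertices with respect to which $G$ is closed, and let $G'$ denote $G$ equipped with this labelling; then $G'$ is closed with respect to its given labelling, it still has no isolated vertices, and Theorem \ref{fiber-reg} applies verbatim to give $\reg(\F(J_{G'})) \leq n - 2c$.

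Next I would argue that $\reg(\F(J_G))$ is unchanged under this relabelling. The permutation $\pi$ induces a graded $\K$-algebra automorphism $\varphi$ of $S$ sending $x_i \mapsto x_{\pi(i)}$ and $y_i \mapsto y_{\pi(i)}$. This automorphism carries $J_G$ isomorphically onto $J_{G'}$ and fixes the homogeneous maximal ideal $\mathfrak{m}$, so it induces isomorphisms $J_G^k/\mathfrak{m}J_G^k \cong J_{G'}^k/\mathfrak{m}J_{G'}^k$ for every $k$, that is, an isomorphism of graded $\K$-algebras $\F(J_G) \cong \F(J_{G'})$. Since $\varphi$ merely permutes, up to sign, the minimal generators of the two ideals, the two defining ideals inside $Q$ differ only by a graded automorphism of $Q$ (a relabelling of the $t_i$), whence $\reg(\F(J_G)) = \reg(\F(J_{G'}))$ in the sense of the preceding \textbf{Notation}.

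Finally I would note that the right-hand side is a combinatorial invariant of the abstract graph: the number of vertices $n$ and the number of connected components $c$ are manifestly unaffected by relabelling. Chaining the three observations gives $\reg(\F(J_G)) = \reg(\F(J_{G'})) \leq n - 2c$, as claimed. I do not expect any genuine obstacle here; the only point that deserves care is the verification that the special fiber ring, taken as an abstract graded algebra together with its regularity, is intrinsic to $G$ and independent of the chosen labelling, which is exactly what the automorphism $\varphi$ supplies.
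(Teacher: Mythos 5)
Your proposal is correct and matches the paper's intent exactly: the paper derives Corollary \ref{reg-fiber} as an immediate consequence of Theorem \ref{fiber-reg}, the only gap being the passage from ``closed with respect to the given labelling'' to ``closed with respect to some labelling,'' which you close by the standard relabelling automorphism argument. The paper leaves this routine verification implicit, so your write-up is simply a more explicit version of the same proof.
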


	Now, we obtain  a lower bound for the regularity of special fiber ring of binomial edge ideals.
\begin{theorem}
	Let $G$ be a graph on $[n]$. If $H$ is an induced subgraph of $G$, then $$\reg(\F(J_H)) \leq \reg(\F(J_G)).$$ In particular, $\reg(\F(J_G))\geq \omega(G) -2 $ if $\omega(G) \geq 4$.
\end{theorem}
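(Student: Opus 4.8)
The plan is to prove the inequality for induced subgraphs by realizing $\F(J_H)$ as an algebra retract of $\F(J_G)$, exactly parallel to the Rees-algebra statement proved above, and then to specialize to a maximum clique. Because $H$ is an \emph{induced} subgraph we have $E(H)\subseteq E(G)$, so the generating minors satisfy $\{f_e : e\in E(H)\}\subseteq\{f_e : e\in E(G)\}$, and hence $\F(J_H)=\K[f_e : e\in E(H)]$ is a $\K$-subalgebra of $\F(J_G)=\K[f_e : e\in E(G)]$, both viewed inside $S$. (This matches the relation $J_H^s=J_G^s\cap S_H$ from \cite[Proposition 3.3]{JAR1} used above, which gives the same inclusion degree by degree.) Writing $Q/I_1\cong\F(J_G)$ and $Q_H/I_2\cong\F(J_H)$ with $Q=\KK[t_e : e\in E(G)]$ and $Q_H=\KK[t_e : e\in E(H)]$, the inclusion supplies the map $Q_H/I_2\hookrightarrow Q/I_1$.

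To produce the retraction I would use the coordinate projection $\rho:S\to S_H$ that fixes $x_j,y_j$ for $j\in V(H)$ and sends $x_j,y_j\mapsto 0$ for $j\notin V(H)$. Inducedness of $H$ is precisely what guarantees the correct behaviour on generators: if $e=\{i,j\}\in E(H)$ then both endpoints lie in $V(H)$ and $\rho(f_e)=f_e$, while if $e\in E(G)\setminus E(H)$ then some endpoint, say $i$, lies outside $V(H)$, so $\rho$ kills both $x_i$ and $y_i$ and therefore annihilates both monomials of $f_e=x_iy_j-x_jy_i$, giving $\rho(f_e)=0$. Consequently $\rho$ restricts to a surjective $\K$-algebra homomorphism $\pi:\F(J_G)\to\F(J_H)$ that is the identity on $\F(J_H)$. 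Thus $\pi\circ\iota=\mathrm{id}_{\F(J_H)}$, so $\F(J_H)$ is an algebra retract of $\F(J_G)$, and \cite[Corollary 2.5]{HHH} yields $\reg(\F(J_H))\le\reg(\F(J_G))$.

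For the particular lower bound, since $\omega(G)\ge 4$ the graph $G$ has a maximum clique, that is, an induced complete subgraph $K_{\omega(G)}$. Applying the inequality just proved to $H=K_{\omega(G)}$ reduces everything to evaluating $\reg(\F(J_{K_m}))$ for $m=\omega(G)$. Here $\F(J_{K_m})=\KK[f_{ij} : 1\le i<j\le m]$ is generated by \emph{all} $2\times 2$ minors of a generic $2\times m$ matrix, so it is the Pl\"ucker coordinate ring of the Grassmannian $\mathrm{Gr}(2,m)$; by Theorem \ref{normal} it is Cohen--Macaulay, and hence its regularity equals the degree of its $h$-polynomial.

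The main obstacle is exactly this last computation, and I would carry it out inside the paper's own framework. By the Sagbi/initial-algebra machinery one has $\reg(\F(J_{K_m}))=\reg(\F(\ini_\sigma(J_{K_m})))=\reg(\F(I(H')))$, where $H'$ is the staircase bipartite graph with $I(H')=\ini_\sigma(J_{K_m})$, so the task becomes reading off the degree of the $h$-vector of this edge subring (the Narayana vector of $\mathrm{Gr}(2,m)$). Tracking this degree as a function of $m$ is the delicate point: it is what determines the exact constant in the resulting clique lower bound for $\reg(\F(J_G))$, and checking that the hypothesis $\omega(G)\ge 4$ is the range in which that bound is non-trivial is part of the same step.
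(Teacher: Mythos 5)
Your first inequality is argued exactly as in the paper: inducedness of $H$ gives $\{f_e : e\in E(H)\}\subseteq\{f_e : e\in E(G)\}$, and the map that kills the classes of $T_{\{i,j\}}$ for $\{i,j\}\in E(G)\setminus E(H)$ (equivalently, your coordinate projection $\rho$, which annihilates $f_e$ because some endpoint of $e$ lies outside $V(H)$ --- this is precisely where inducedness is used) exhibits $\F(J_H)$ as an algebra retract of $\F(J_G)$, after which \cite[Corollary 2.5]{HHH} gives $\reg(\F(J_H))\leq\reg(\F(J_G))$. That half is complete and matches the paper.

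The gap is in the ``in particular'' statement. You correctly reduce to evaluating $\reg(\F(J_{K_m}))$ for $m=\omega(G)$, but you then label this evaluation ``the main obstacle'' and ``the delicate point'' and never perform it, so the lower bound $\omega(G)-2$ is not actually derived; a reduction to an uncomputed quantity is not a proof. The paper closes this step in one line: a maximum clique is a closed graph, so Theorem \ref{fiber-reg} gives $\reg(\F(J_{K_m}))=\reg(\F(\ini_{\sigma}(J_{K_m})))$, and the latter is the toric ring of the staircase Ferrers graph, whose regularity is quoted from \cite[Proposition 5.7]{AU09} as $m-2$. I will add a warning about the route you sketch: your identification of $\F(J_{K_m})$ with the Pl\"ucker coordinate ring of $\mathrm{Gr}(2,m)$ is correct, but its $h$-polynomial (the Narayana polynomial) has degree $m-3$, not $m-2$; for $m=4$ the ring is a single quadric hypersurface, of regularity $1$. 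So carrying your computation to the end would not reproduce the constant the paper extracts from \cite[Proposition 5.7]{AU09}, and reconciling the two (in particular, checking whether that proposition records the regularity of the toric ring or of its defining ideal) is an essential part of finishing the argument, not an optional afterthought.
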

\begin{proof}
	Let $H$ be an induced subgraph of $G$. Clearly, $\F(J_H)$ is a $\KK$-subalgebra of $\F(J_G)$.  Set $R_1=\KK[T_{\{i,j\}} : \{i,j\} \in E(H) \text{ with } i < j]$ and $R_2=\KK[T_{\{i,j\}} : \{i,j\} \in E(G) \text{ with } i < j]$. Let $I_1$ and $I_2$ be ideals of $R_1$ and $R_2$, respectively such that $R_1/I_1 \simeq \F(J_H)$, and $R_2/I_2 \simeq \F(J_G)$. Now, define $\pi : R_2/I_2 \rightarrow R_1/I_1$ as  $\pi(\overline{T}_{\{i,j\}})=0$ if $\{i,j\} \not\in E(H)$, and  $\pi(\overline{T}_{\{i,j\}})=\overline{T}_{\{i,j\}}$ if $\{i,j\} \in E(H)$.  Consider, $R_1/I_1 \stackrel{i}\hookrightarrow R_2/I_2 \xrightarrow{\pi} R_1/I_1$. Then, $\pi \circ i$ is identity on $R_1/I_1$, and hence, $R_1/I_1$ is an algebra retract of $R_2/I_2$. It follows from \cite[Corollary 2.5]{HHH} that $\reg(R_1/I_1) \leq \reg(R_2/I_2)$. Hence, $\reg(\F(J_H) ) \leq \reg(\F(J_G))$. 
	
	Assume that $\omega(G) \geq 4 $. Let $H$ be a clique of $G$ such that $\omega(G)=|V(H)|$.  Then, $H$ is an induced subgraph of $G$.  Since $H$ is a complete graph, $H$ is closed with respect to the given labelling of vertices.  By Theorem \ref{fiber-reg}, $\reg(\F(J_H)) =\reg(\F(\ini_{\sigma}(J_H)))$. Now, it follows from \cite[Proposition 5.7]{AU09} that $\reg(\F(\ini_{\sigma}(J_H))) =|V(H)|-2=\omega(G)-2$.  Hence, $\reg(\F(J_G)) \geq \omega(G)-2$.
\end{proof}

	\vskip 2mm
	\noindent
	\textbf{Acknowledgements:}  The author sincerely  thanks Prof. Viviana Ene for valuable comments and suggestions. The author also wishes to express his sincere gratitude to the anonymous referees for useful comments.

\end{document}